\numberwithin{equation}{section}
\newtheorem{theorem}{Theorem}[section]
\newtheorem{example}[theorem]{Example}
\theoremstyle{definition}
\newtheorem{remark}[theorem]{Remark}
\theoremstyle{definition}
\theoremstyle{definition}
\def\dashint{\operatorname%
{\,\,\text{\bf-}\kern-.98em\DOTSI\intop\ilimits@\!\!}}
\def\\det{\text{det}}
\def\.5{\frac{1}{2}}
\newcommand{\RN}[1]{%
  \textup{\uppercase\expandafter{\romannumeral#1}}%
}
\renewcommand{\epsilon}{\varepsilon}
\newcounter{marnote}
\begin{document}
\title[Convergence for the fractional $p$-Laplacian]{Convergence for the fractional $p$-Laplacian and its corresponding extended Nirenberg problem}


\author[Z.W. Zhao]{Zhiwen Zhao}

\address[Z.W. Zhao]{Beijing Computational Science Research Center, Beijing 100193, China.}
\email{zwzhao365@163.com}


\date{\today} 



\begin{abstract}
The main objective of this paper is to establish the convergence for the fractional $p$-Laplacian of nonnegative sequence of functions with $p>2$. Further, we show the blow-up phenomena for solutions to the extended Nirenberg problem modeled by fractional $p$-Laplacian with the prescribed negative functions.
\end{abstract}

\maketitle



\section{Introduction and main results}

Let $n\geq1$, $p\geq2$ and $0<\sigma<1$. Define the fractional $p$-Laplacian $(-\Delta)^{\sigma}_{p}$ as follows:
\begin{align*}
(-\Delta)^{\sigma}_{p}u(x)=c_{n,\sigma p}P.V.\int_{\mathbb{R}^{n}}\frac{|u(x)-u(y)|^{p-2}(u(x)-u(y))}{|x-y|^{n+\sigma p}}dy,
\end{align*}
where $c_{n,\sigma p}$ is a positive constant and $P.V.$ represents the Cauchy principal value. It is worth pointing out that $(-\Delta)^{\sigma}_{p}$ becomes the linear fractional Laplacian operator $(-\Delta)^{\sigma}$ if $p=2$, while it is a nonlinear nonlocal operator if $p>2$. The definition of $(-\Delta)^{\sigma}_{p}u$ is valid under the condition that $u\in C^{\sigma p+\alpha}_{loc}(\mathbb{R}^{n})\cap\mathcal{L}_{\sigma p}(\mathbb{R}^{n})$ for some $\alpha>0$, where $C^{\sigma p+\alpha}_{loc}:=C^{[\sigma p+\alpha],\sigma p+\alpha-[\sigma p+\alpha]}_{loc}$ with $[\sigma p+\alpha]$ denoting the integer part of $\sigma p+\alpha$,
\begin{align*}
\mathcal{L}_{\sigma p}(\mathbb{R}^{n}):=\left\{u\in L^{p-1}_{loc}(\mathbb{R}^{n})\,\Big|\;\int_{\mathbb{R}^{n}}\frac{|u(x)|^{p-1}}{1+|x|^{n+\sigma p}}dx<\infty \right\}.
\end{align*}

Recently, Du, Jin, Xiong and Yang \cite{DJXY2021} derived the following fact:
\begin{align*}
``&\text{If $u_{i}\rightarrow u$ in $C^{2\sigma+\alpha}_{loc}$ as $i\rightarrow\infty$, and $\{(-\Delta)^{\sigma}u_{i}\}$ converges pointwisely,} \\ &\text{then $(-\Delta)^{\sigma}u_{i}\rightarrow (-\Delta)^{\sigma}u-\theta$ for some $\theta\geq0$,}"
\end{align*}
where the constant $\theta$ may be greater than zero, which is different from the classical Laplacian operator. However, the proof for the linear fractional Laplacian cannot be directly used to deal with the nonlinear case. So, in this paper we aim to overcome the nonlinear difficulty for the fractional $p$-Laplacian operator and prove that the above fact also holds for the nonlinear nonlocal operator $(-\Delta)^{\sigma}_{p}$ with $p>2$. The principal result of this paper is stated as follows.

\begin{theorem}\label{thm001}
Let $n\geq1$, $p>2$, $0<\sigma<1$ and $\alpha>0$. Assume that a sequence of nonnegative functions $\{u_{i}\}\subset \mathcal{L}_{\sigma p}(\mathbb{R}^{n})\cap C^{\sigma p+\alpha}_{loc}(\mathbb{R}^{n})$ converges in $C_{loc}^{\sigma p+\alpha}(\mathbb{R}^{n})$ to a function $u\in \mathcal{L}_{\sigma p}(\mathbb{R}^{n})$, and $\{(-\Delta)^{\sigma}_{p}u_{i}\}$ converges pointwisely in $\mathbb{R}^{n}$. Then for any $x\in\mathbb{R}^{n}$,
\begin{align*}
\lim_{i\rightarrow\infty}(-\Delta)^{\sigma}_{p}u_{i}(x)=(-\Delta)^{\sigma}_{p}u(x)-\theta,
\end{align*}
where $\theta$ is a nonnegative constant given by
\begin{align*}
\theta=c_{n,\sigma p}\lim_{R\rightarrow\infty}\lim_{i\rightarrow\infty}\int_{B_{R}^{c}}\frac{u_{i}^{p-1}(x)}{|x|^{n+\sigma p}}dx.
\end{align*}

\end{theorem}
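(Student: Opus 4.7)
The plan is to fix $x \in \mathbb{R}^n$ and, for each large radius $R > 2|x|$, split the defining integral as
\[
(-\Delta)^\sigma_p u_i(x) = c_{n,\sigma p} \bigl(I_R(u_i;x) + J_R(u_i;x)\bigr),
\]
where $I_R(u_i;x)$ is the principal value integral over $B_R(x)$ and $J_R(u_i;x)$ is the integral over $B_R(x)^c$. I would then pass first $i \to \infty$ for fixed $R$, and subsequently $R \to \infty$.

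For the local piece, I would show that $I_R(u_i;x) \to I_R(u;x)$ as $i \to \infty$. The $C^{\sigma p+\alpha}_{loc}$ convergence $u_i \to u$ provides uniform Hölder, and (when $\sigma p + \alpha \geq 1$) first-order Taylor, control of $u_i - u_i(x)$ near $x$; since $s \mapsto |s|^{p-2}s$ is odd, the leading singular contribution cancels in the principal value, leaving an integrand with an integrable majorant on $B_R(x)$ depending only on local $C^{\sigma p+\alpha}$-norms. Dominated convergence then delivers the limit.

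For the tail piece the starting point is the algebraic decomposition, valid because $u_i \geq 0$:
\[
|u_i(x) - u_i(y)|^{p-2}(u_i(x) - u_i(y)) = -u_i(y)^{p-1} + r_i(x, y).
\]
The mean value theorem applied to $s \mapsto s^{p-1}$ in the regime $u_i(y) \geq 2u_i(x)$, plus a direct estimate when $u_i(y) < 2u_i(x)$, yields
\[
|r_i(x, y)| \leq C_p\, u_i(x)\bigl(u_i(x)^{p-2} + u_i(y)^{p-2}\bigr).
\]
Combined with $u_i(y)^{p-2} \leq 1 + u_i(y)^{p-1}$ and $\int_{B_R^c(x)} |x-y|^{-(n+\sigma p)}\, dy = C R^{-\sigma p}$, this shows that the $r_i$-contribution to $J_R(u_i;x)$ is $o_R(1)$ as $R \to \infty$, once one has uniform-in-$i$ control of the nonnegative tail $\int_{B_R^c(x)} u_i(y)^{p-1}/|x-y|^{n+\sigma p}\, dy$.

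This uniform tail bound, together with the existence of the inner $i$-limit defining $\theta$, is the hardest part. I would extract both from the hypothesis as follows: since $(-\Delta)^\sigma_p u_i(x)$ converges pointwise and $I_R(u_i;x) \to I_R(u;x)$, the quantity $J_R(u_i;x)$ has a limit $L_R(x)$ for each $R$, and by the decomposition above so does $\int_{B_R^c(x)} u_i^{p-1}/|x-y|^{n+\sigma p}\, dy$. Since $|x-y|/|y| \to 1$ uniformly on $|y| \geq R$ as $R \to \infty$, I can replace the kernel $|x-y|^{-(n+\sigma p)}$ by $|y|^{-(n+\sigma p)}$ modulo an $o_R(1)$ error (uniform in $i$ by the just-obtained uniform bound), which produces the $x$-independent formula for $\theta$. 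Comparing with the analogous decomposition applied to $(-\Delta)^\sigma_p u(x)$, whose tail vanishes as $R \to \infty$ by $u \in \mathcal{L}_{\sigma p}$, gives $\lim_{i \to \infty} (-\Delta)^\sigma_p u_i(x) = (-\Delta)^\sigma_p u(x) - \theta$; the inequality $\theta \geq 0$ is immediate from the nonnegativity of $u_i$.
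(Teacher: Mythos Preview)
Your overall strategy---split into local and tail pieces, pass to the limit locally by dominated convergence, and in the tail write $|u_i(x)-u_i(y)|^{p-2}(u_i(x)-u_i(y))=-u_i(y)^{p-1}+r_i$---matches the paper's. The gap is in your control of the remainder $r_i$. Your mean-value bound $|r_i|\le C_p\,u_i(x)\bigl(u_i(x)^{p-2}+u_i(y)^{p-2}\bigr)$ is correct, but the step $u_i(y)^{p-2}\le 1+u_i(y)^{p-1}$ is fatal: it leaves a term $C_p\,u_i(x)\,u_i(y)^{p-1}$ in the integrand, whose tail integral is of order $u(x)\cdot\theta/c_{n,\sigma p}$ in the double limit, \emph{not} $o_R(1)$. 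In other words, your ``error'' is the same size as the main term whenever $u(x)>0$ and $\theta>0$, so you cannot isolate $\theta$ this way. The same issue undermines your claim that $\lim_i\int_{B_R^c}u_i^{p-1}/|x-y|^{n+\sigma p}\,dy$ exists for each fixed $R$: that would require $\int r_i$ to converge, which you have no handle on since $u_i(y)$ is uncontrolled for $y$ outside compacta.

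The missing idea, which is precisely what the paper supplies, is an $\varepsilon$-weighted Young inequality in place of your crude bound: for any $\varepsilon>0$,
\[
u_i(x)\,u_i(y)^{p-2}\le \varepsilon\,u_i(y)^{p-1}+C_p\,\varepsilon^{-(p-2)}u_i(x)^{p-1}.
\]
This yields the two-sided sandwich $(1-C\varepsilon)\,u_i(y)^{p-1}-C_\varepsilon u_i(x)^{p-1}\le -|u_i(x)-u_i(y)|^{p-2}(u_i(x)-u_i(y))\le (1+C\varepsilon)\,u_i(y)^{p-1}+C_\varepsilon u_i(x)^{p-1}$. After integrating over $B_R^c$, the $u_i(x)^{p-1}$ pieces contribute $C_\varepsilon R^{-\sigma p}\to 0$, and since $\lim_R\lim_i J_R$ exists you obtain $\limsup_R\limsup_i T_i\le (1-C\varepsilon)^{-1}(-\lim_R\lim_i J_R)$ and the matching lower bound with $(1+C\varepsilon)^{-1}$; letting $\varepsilon\to 0$ forces the double limit of $T_i$ to exist and equal $-\lim_R\lim_i J_R$. (Your mean-value estimate for $r_i$ is in fact cleaner than the paper's three-case analysis of $|u_i(y)^{p-2}-|u_i(x)-u_i(y)|^{p-2}|$; combined with the $\varepsilon$-Young step it gives a shorter route to the same sandwich.)
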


\begin{proof}[Proof of Theorem \ref{thm001}]
For any fixed $x\in\mathbb{R}^{n}$ and $R>>|x|+1$, let
\begin{align}\label{MZ000}
&(-\Delta)^{\sigma}_{p}u(x)-(-\Delta)^{\sigma}_{p}u_{i}(x)\notag\\
=&c_{n,\sigma p}\int_{B_{R}(0)}\frac{|u(x)-u(y)|^{p-2}(u(x)-u(y))-|u_{i}(x)-u_{i}(y)|^{p-2}(u_{i}(x)-u_{i}(y))}{|x-y|^{n+\sigma p}}dy\notag\\
&+c_{n,\sigma p}\int_{B_{R}^{c}(0)}\frac{|u(x)-u(y)|^{p-2}(u(x)-u(y))}{|x-y|^{n+\sigma p}}dy\notag\\
&+c_{n,\sigma p}\int_{B_{R}^{c}(0)}\frac{-|u_{i}(x)-u_{i}(y)|^{p-2}(u_{i}(x)-u_{i}(y))}{|x-y|^{n+\sigma p}}dy\notag\\
=&\Phi_{i}(x,R)+\mathfrak{G}(x,R)+\Psi_{i}(x,R).
\end{align}
In light of the fact that $u_{i}\rightarrow u$ in $C^{\sigma p+\alpha}(B_{2R}(0))$, we obtain that for each $0<\varepsilon<1$, there exists an integer $N>0$ such that for every $i>N$,
\begin{align}\label{ZW001}
\|u_{i}-u\|_{C^{\sigma p+\alpha}(B_{2R}(0))}\leq \varepsilon^{\frac{p}{\min\{1,p-2\}}},\quad \|u_{i}\|_{C^{\sigma p+\alpha}(B_{2R}(0))}\leq\|u\|_{C^{\sigma p+\alpha}(B_{2R}(0))}+1.
\end{align}
Define
\begin{align*}
\Phi_{i}(x,R\setminus\varepsilon):=\Phi_{i}(x,R)-\Phi_{i}(x,\varepsilon),\quad\mathcal{M}:=\|u\|_{C^{\sigma p+\alpha}(B_{2R}(0))}+1,
\end{align*}
where $\Phi_{i}(x,\varepsilon)$ denotes the integral in $\Phi_{i}(x,R)$ with the domain $B_{R}(0)$ replaced by $B_{\varepsilon}(x)$. Using \eqref{ZW001}, we deduce that for $x,y\in B_{2R}(0)$, $i>N$,
\begin{align*}
&\left||u(x)-u(y)|^{p-2}(u(x)-u(y))-|u_{i}(x)-u_{i}(y)|^{p-2}(u_{i}(x)-u_{i}(y))\right|\notag\\
&\leq|u(x)-u(y)|^{p-2}|(u-u_{i})(x)-(u-u_{i})(y)|\notag\\
&\quad+\left||u(x)-u(y)|^{p-2}-|u_{i}(x)-u_{i}(y)|^{p-2}\right||u_{i}(x)-u_{i}(y)|\notag\\
&\leq C(p,\mathcal{M})\|u_{i}-u\|^{\min\{1,p-2\}}_{L^{\infty}(B_{2R}(0))}\leq C(p,\mathcal{M})\varepsilon^{p},
\end{align*}
which yields that
\begin{align}\label{ZW002}
\left|\Phi_{i}(x,R\setminus\varepsilon)\right|\leq& C(p,\mathcal{M})\varepsilon^{p}\int_{B_{2R}(x)\setminus B_{\varepsilon}(x)}\frac{dy}{|x-y|^{n+\sigma p}}\leq C(p,n,\sigma,\mathcal{M})\varepsilon^{(1-\sigma)p}.
\end{align}

On the other hand, if $\sigma p+\alpha\in(0,1]$, then it follows from \eqref{ZW001} that
\begin{align}\label{ZW003}
\left|\Phi_{i}(x,\varepsilon)\right|\leq& C(p,\sigma,\alpha,\mathcal{M})\int_{B_{\varepsilon}(x)}\frac{|x-y|^{(\sigma p+\alpha)(p-1)}}{|x-y|^{n+\sigma p}}\notag\\
\leq& C(p,n,\sigma,\alpha,\mathcal{M})\varepsilon^{(\sigma p+\alpha)(p-2)+\alpha}.
\end{align}
When $\sigma p+\alpha\in(1,\infty)$, utilizing \eqref{ZW001} again, it follows from Taylor expansion that
\begin{align*}
&\left||u_{i}(x)-u_{i}(y)|^{p-2}(u_{i}(x)-u_{i}(y))-|\nabla u_{i}(x)(x-y)|^{p-2}\nabla u_{i}(x)(x-y)\right|\notag\\
&\leq C(p,\sigma,\alpha,\mathcal{M})\left(|\nabla u_{i}(x)(x-y)|^{p-2}+|x-y|^{\min\{2,\sigma p+\alpha\}(p-2)}\right)|x-y|^{\min\{2,\sigma p+\alpha\}}\notag\\
&\leq C(p,\sigma,\alpha,\mathcal{M})|x-y|^{\min\{p,(\sigma+1)p+\alpha-2\}},
\end{align*}
where we utilized the following element inequality
\begin{align*}
\left||a|^{p-2}a-|b|^{p-2}b\right|\leq C(p)|a-b|\left(|a-b|^{p-2}+|b|^{p-2}\right),\quad\text{for $a,b\in\mathbb{R}^{n}$.}
\end{align*}
By the same argument, we have
\begin{align*}
&\left||u(x)-u(y)|^{p-2}(u(x)-u(y))-|\nabla u(x)(x-y)|^{p-2}\nabla u(x)(x-y)\right|\notag\\
&\leq C(p,\sigma,\alpha,\mathcal{M})|x-y|^{\min\{p,(\sigma+1)p+\alpha-2\}}.
\end{align*}
Therefore, we obtain that if $\sigma p+\alpha\in(1,\infty)$,
\begin{align}\label{ZW005}
\left|\Phi_{i}(x,\varepsilon)\right|\leq& C(p,\sigma,\alpha,\mathcal{M})\int_{B_{\varepsilon}(x)}\frac{|x-y|^{\min\{p,(\sigma+1)p+\alpha-2\}}}{|x-y|^{n+\sigma p}}dy\notag\\
\leq& C(p,n,\sigma,\alpha,\mathcal{M})\varepsilon^{\min\{(1-\sigma)p,p+\alpha-2\}},
\end{align}
where we utilized the anti-symmetry of $\nabla u(x)(x-y)$ and $\nabla u_{i}(x)(x-y)$ with regard to the center $x$. Consequently, combining \eqref{ZW002}--\eqref{ZW005}, we deduce that for every $i>N$,
\begin{align*}
\left|\Phi_{i}(x,R)\right|\leq C(p,n,\sigma,\alpha,\mathcal{M})
\begin{cases}
\varepsilon^{\min\{(1-\sigma)p,(\sigma p+\alpha)(p-2)+\alpha\}},&\text{if $\sigma p+\alpha\in(0,1]$},\\
\varepsilon^{\min\{(1-\sigma)p,p+\alpha-2\}},&\text{if $\sigma p+\alpha\in(1,\infty)$},
\end{cases}
\end{align*}
which implies that
\begin{align}\label{MZ001}
\lim_{i\rightarrow\infty}\Phi_{i}(x,R)=0.
\end{align}

Note that $\{(-\Delta)^{\sigma}_{p}u_{i}\}$ is a pointwise convergent sequence, we then deduce from \eqref{MZ000} and \eqref{MZ001} that
\begin{align}\label{MZ002}
\lim\limits_{i\rightarrow\infty}\Psi_{i}(x,R)\;\,\text{exists and is finite}.
\end{align}
Since $u\in \mathcal{L}_{\sigma p}(\mathbb{R}^{n})$ and $R>>|x|+1$, then
\begin{align*}
&\limsup_{R\rightarrow\infty}\int_{B_{R}^{c}(0)}\frac{|u(x)-u(y)|^{p-1}}{|x-y|^{n+\sigma p}}dy\notag\\
\leq &\limsup_{R\rightarrow\infty}\left(\frac{R}{R-|x|}\right)^{n+\sigma p}\int_{B_{R}^{c}(0)}\frac{C(p)(u^{p-1}(x)+u^{p-1}(y))}{|y|^{n+\sigma p}}dy=0,
\end{align*}
which yields that
\begin{align*}
\lim_{R\rightarrow\infty}\mathfrak{G}(x,R)=0.
\end{align*}
This, in combination with \eqref{MZ000} and \eqref{MZ001}--\eqref{MZ002}, leads to that $\lim\limits_{R\rightarrow\infty}\lim\limits_{i\rightarrow\infty}\Psi_{i}(x,R)$ exists and is finite,
\begin{align}\label{MZ003}
(-\Delta)^{\sigma}_{p}u(x)-\lim_{i\rightarrow\infty}(-\Delta)^{\sigma}_{p}u_{i}(x)=\lim_{R\rightarrow\infty}\lim_{i\rightarrow\infty}\Psi_{i}(x,R).
\end{align}

Denote
\begin{align*}
\mathcal{K}_{1}:=&-u_{i}^{p-2}(y)u_{i}(x),\notag\\
\mathcal{K}_{2}:=&\left(u_{i}^{p-2}(y)-|u_{i}(x)-u_{i}(y)|^{p-2}\right)u_{i}(x),\notag\\
\mathcal{K}_{3}:=&-\left(u_{i}^{p-2}(y)-|u_{i}(x)-u_{i}(y)|^{p-2}\right)u_{i}(y),\notag\\
\Theta:=&-|u_{i}(x)-u_{i}(y)|^{p-2}(u_{i}(x)-u_{i}(y)).
\end{align*}
Then we have
\begin{align}\label{MZ005}
u_{i}^{p-1}(y)-\sum^{3}_{j=1}|\mathcal{K}_{j}|\leq\Theta=u_{i}^{p-1}(y)+\sum^{3}_{j=1}\mathcal{K}_{j}\leq u_{i}^{p-1}(y)+\sum^{3}_{j=2}|\mathcal{K}_{j}|.
\end{align}

For any given $\varepsilon>0$, it follows from Young's inequality that
\begin{align}\label{MZ006}
|\mathcal{K}_{1}|\leq \varepsilon u^{p-1}_{i}(y)+\frac{C(p)}{\varepsilon^{p-2}}u^{p-1}_{i}(x).
\end{align}
We now divide into three cases to estimate $\mathcal{K}_{2}$ and $\mathcal{K}_{3}$ in the following.

{\bf Case 1.} Consider $2<p\leq3$. Since
\begin{align*}
&u_{i}^{p-2}(y)\leq (|u_{i}(y)-u_{i}(x)|+u_{i}(x))^{p-2}\leq|u_{i}(y)-u_{i}(x)|^{p-2}+u_{i}^{p-2}(x),\notag\\
&|u_{i}(y)-u_{i}(x)|^{p-2}\leq u_{i}^{p-2}(y)+u_{i}^{p-2}(x),
\end{align*}
then
\begin{align*}
\left|u_{i}^{p-2}(y)-|u_{i}(x)-u_{i}(y)|^{p-2}\right|\leq u_{i}^{p-2}(x).
\end{align*}
Hence it follows from Young's inequality that
\begin{align}\label{MZ007}
|\mathcal{K}_{2}|\leq u^{p-1}_{i}(x),\quad|\mathcal{K}_{3}|\leq \varepsilon u_{i}^{p-1}(y)+\frac{C(p)}{\varepsilon^{\frac{1}{p-2}}}u^{p-1}_{i}(x).
\end{align}
Substituting \eqref{MZ006}--\eqref{MZ007} into \eqref{MZ005}, we derive
\begin{align}\label{MZ008}
(1-2\varepsilon)u^{p-1}_{i}(y)-\frac{C(p)}{\varepsilon^{p-2}}u^{p-1}_{i}(x)\leq\Theta\leq(1+\varepsilon)u^{p-1}_{i}(y)+\frac{C(p)}{\varepsilon^{\frac{1}{p-2}}}u^{p-1}_{i}(x).
\end{align}

{\bf Case 2.} Consider the case when $p>3$ is an integer. From the binomial theorem and Young's inequality, we have
\begin{align}\label{MZ009}
(a+b)^{p-2}=&a^{p-2}+\sum^{p-2}_{j=1}C_{p-2}^{j}a^{p-2-j}b^{j}\leq(1+\varepsilon)a^{p-2}+C(p)b^{p-2}\sum^{p-2}_{j=1}\varepsilon^{-\frac{p-2-k}{k}}\notag\\ \leq&(1+\varepsilon)a^{p-2}+\frac{C(p)}{\varepsilon^{p-3}}b^{p-2},\quad \text{for any }a,b\geq0.
\end{align}
Using \eqref{MZ009}, we deduce
\begin{align*}
u_{i}^{p-2}(y)\leq(|u_{i}(y)-u_{i}(x)|+u_{i}(x))^{p-2}\leq(1+\varepsilon)|u_{i}(x)-u_{i}(y)|^{p-2}+\frac{C(p)}{\varepsilon^{p-3}}u_{i}^{p-2}(x),
\end{align*}
which implies that
\begin{align*}
u_{i}^{p-2}(y)-|u_{i}(y)-u_{i}(x)|^{p-2}\leq&\varepsilon|u_{i}(x)-u_{i}(y)|^{p-2}+\frac{C(p)}{\varepsilon^{p-3}}u_{i}^{p-2}(x)\notag\\
\leq&\varepsilon(1+\varepsilon)u^{p-2}_{i}(y)+\frac{C(p)}{\varepsilon^{p-3}}u^{p-2}_{i}(x).
\end{align*}
Analogously,
\begin{align*}
|u_{i}(y)-u_{i}(x)|^{p-2}-u_{i}^{p-2}(y)\leq\varepsilon u^{p-2}_{i}(y)+\frac{C(p)}{\varepsilon^{p-3}}u^{p-2}_{i}(x).
\end{align*}
Hence, we have
\begin{align}\label{MZ010}
\left|u_{i}^{p-2}(y)-|u_{i}(y)-u_{i}(x)|^{p-2}\right|\leq\varepsilon(1+\varepsilon)u^{p-2}_{i}(y)+\frac{C(p)}{\varepsilon^{p-3}}u^{p-2}_{i}(x).
\end{align}
Utilizing \eqref{MZ010} and Young's inequality, we obtain
\begin{align*}
|\mathcal{K}_{2}|\leq&\varepsilon(1+\varepsilon)u^{p-2}_{i}(y)u_{i}(x)+\frac{C(p)}{\varepsilon^{p-3}}u^{p-1}_{i}(x)\leq\varepsilon u^{p-1}_{i}(y)+\frac{C(p)}{\varepsilon^{p-3}}u^{p-1}_{i}(x),\notag\\
|\mathcal{K}_{3}|\leq&\varepsilon(1+\varepsilon)u^{p-1}_{i}(y)+\frac{C(p)}{\varepsilon^{p-3}}u^{p-2}_{i}(x)u_{i}(y)\leq\varepsilon(2+\varepsilon) u^{p-1}_{i}(y)+\frac{C(p)}{\varepsilon^{p-2}}u^{p-1}_{i}(x),
\end{align*}
which, in combination with \eqref{MZ005}--\eqref{MZ006}, gives that
\begin{align}
\Theta\leq&(1+3\varepsilon+\varepsilon^{2})u^{p-1}_{i}(y)+\frac{C(p)}{\varepsilon^{p-2}}u^{p-1}_{i}(x),\label{MZ011}\\
\Theta\geq&(1-4\varepsilon-\varepsilon^{2})u^{p-1}_{i}(y)-\frac{C(p)}{\varepsilon^{p-2}}u^{p-1}_{i}(x).\label{MZ012}
\end{align}

{\bf Case 3.} Consider the case when $p>3$ is not an integer. On one hand, making use of \eqref{MZ009}, we obtain
\begin{align}\label{MZ013}
|u_{i}(x)-u_{i}(y)|^{p-2}\leq&(u_{i}(x)+u_{i}(y))^{[p-2]+(p-[p])}\notag\\
\leq&\left((1+\varepsilon)u^{[p-2]}_{i}(y)+\frac{C(p)}{\varepsilon^{[p-3]}}u^{[p-2]}_{i}(x)\right)\left(u^{p-[p]}_{i}(y)+u_{i}^{p-[p]}(x)\right)\notag\\
=&(1+\varepsilon)u^{p-2}_{i}(y)+\frac{C(p)}{\varepsilon^{[p-3]}}u^{p-[p]}_{i}(y)u^{[p-2]}_{i}(x)\notag\\
&+(1+\varepsilon)u^{[p-2]}_{i}(y)u^{p-[p]}_{i}(x)+\frac{C(p)}{\varepsilon^{[p-3]}}u^{p-2}_{i}(x).
\end{align}
From Young's inequality, we deduce
\begin{align}
&\frac{C(p)}{\varepsilon^{[p-3]}}u^{p-[p]}_{i}(y)u^{[p-2]}_{i}(x)\leq\varepsilon u^{p-2}_{i}(y)+\frac{C(p)}{\varepsilon^{p-3}}u^{p-2}_{i}(x),\label{MZ015}\\
&(1+\varepsilon)u^{[p-2]}_{i}(y)u^{p-[p]}_{i}(x)\leq\varepsilon u^{p-2}_{i}(y)+\frac{C(p)}{\varepsilon^{\frac{[p-2]}{p-[p]}}}u^{p-2}_{i}(x).\label{MZ016}
\end{align}
Substituting \eqref{MZ015}--\eqref{MZ016} into \eqref{MZ013}, it follows that
\begin{align}\label{MZ017}
|u_{i}(x)-u_{i}(y)|^{p-2}-u^{p-2}_{i}(y)\leq3\varepsilon u^{p-2}_{i}(y)+\frac{C(p)}{\varepsilon^{\frac{[p-2]}{p-[p]}}}u^{p-2}_{i}(x).
\end{align}

On the other hand, using \eqref{MZ009} again, we have
\begin{align}\label{MZ018}
u_{i}^{p-2}(y)\leq&(|u_{i}(y)-u_{i}(x)|+u_{i}(x))^{[p-2]+(p-[p])}\notag\\
\leq&\left((1+\varepsilon)|u_{i}(x)-u_{i}(y)|^{[p-2]}+\frac{C(p)}{\varepsilon^{[p-3]}}u^{[p-2]}_{i}(x)\right)\notag\\
&\cdot\left(|u_{i}(x)-u_{i}(y)|^{p-[p]}+u^{p-[p]}_{i}(x)\right)\notag\\
=&(1+\varepsilon)|u_{i}(x)-u_{i}(y)|^{p-2}+\frac{C(p)}{\varepsilon^{[p-3]}}u^{[p-2]}_{i}(x)|u_{i}(x)-u_{i}(y)|^{p-[p]}\notag\\
&+(1+\varepsilon)u^{p-[p]}_{i}(x)|u_{i}(x)-u_{i}(y)|^{[p-2]}+\frac{C(p)}{\varepsilon^{[p-3]}}u^{p-2}_{i}(x).
\end{align}
It follows from Young's inequality that
\begin{align}
&\frac{C(p)}{\varepsilon^{[p-3]}}u^{[p-2]}_{i}(x)|u_{i}(x)-u_{i}(y)|^{p-[p]}\leq\varepsilon|u_{i}(x)-u_{i}(y)|^{p-2}+\frac{C(p)}{\varepsilon^{p-3}}u^{p-2}_{i}(x),\label{MZ019}\\
&(1+\varepsilon)u^{p-[p]}_{i}(x)|u_{i}(x)-u_{i}(y)|^{[p-2]}\leq\varepsilon|u_{i}(x)-u_{i}(y)|^{p-2}+\frac{C(p)}{\varepsilon^{\frac{[p-2]}{p-[p]}}}u^{p-2}_{i}(x).\label{MZ020}
\end{align}
Combining \eqref{MZ017}--\eqref{MZ020}, we deduce
\begin{align*}
u_{i}^{p-2}(y)-|u_{i}(x)-u_{i}(y)|^{p-2}\leq&3\varepsilon|u_{i}(x)-u_{i}(y)|^{p-2}+\frac{C(p)}{\varepsilon^{\frac{[p-2]}{p-[p]}}}u^{p-2}_{i}(x)\notag\\
\leq&3\varepsilon(1+3\varepsilon)u^{p-2}_{i}(y)+\frac{C(p)}{\varepsilon^{\frac{[p-2]}{p-[p]}}}u^{p-2}_{i}(x).
\end{align*}
This, together with \eqref{MZ017} again, gives that
\begin{align}\label{MZ021}
\left|u_{i}^{p-2}(y)-|u_{i}(x)-u_{i}(y)|^{p-2}\right|\leq&3\varepsilon(1+3\varepsilon)u^{p-2}_{i}(y)+\frac{C(p)}{\varepsilon^{\frac{[p-2]}{p-[p]}}}u^{p-2}_{i}(x).
\end{align}
In light of \eqref{MZ021}, it follows from Young's inequality that
\begin{align}
|\mathcal{K}_{2}|\leq&3\varepsilon(1+3\varepsilon)u^{p-2}_{i}(y)u_{i}(x)+\frac{C(p)}{\varepsilon^{\frac{[p-2]}{p-[p]}}}u^{p-1}_{i}(x)\notag\\
\leq&\varepsilon u^{p-1}_{i}(y)+\frac{C(p)}{\varepsilon^{\frac{[p-2]}{p-[p]}}}u^{p-1}_{i}(x),\label{MZ022}\\
|\mathcal{K}_{3}|\leq&3\varepsilon(1+3\varepsilon)u^{p-1}_{i}(y)+\frac{C(p)}{\varepsilon^{\frac{[p-2]}{p-[p]}}}u^{p-2}_{i}(x)u_{i}(y)\notag\\
\leq&\varepsilon(4+9\varepsilon) u^{p-1}_{i}(y)+\frac{C(p)}{\varepsilon^{\frac{[p-1]}{p-[p]}}}u^{p-1}_{i}(x).\label{MZ023}
\end{align}
Therefore, substituting \eqref{MZ006} and \eqref{MZ022}--\eqref{MZ023} into \eqref{MZ005}, we derive
\begin{align}
\Theta\leq&(1+5\varepsilon+9\varepsilon^{2})u^{p-1}_{i}(y)+\frac{C(p)}{\varepsilon^{\frac{[p-1]}{p-[p]}}}u^{p-1}_{i}(x),\label{MZ026}\\
\Theta\geq&(1-6\varepsilon-9\varepsilon^{2})u^{p-1}_{i}(y)-\frac{C(p)}{\varepsilon^{\frac{[p-1]}{p-[p]}}}u^{p-1}_{i}(x).\label{MZ027}
\end{align}

Observe that
\begin{align}\label{MZ028}
\lim_{R\rightarrow\infty}\lim_{i\rightarrow\infty}\int_{B_{R}^{c}(0)}\frac{u_{i}^{p-1}(x)}{|x-y|^{n+\sigma p}}dy=&u^{p-1}(x)\lim_{R\rightarrow\infty}\int_{B_{R}^{c}(0)}\frac{dy}{|x-y|^{n+\sigma p}}\notag\\
\leq&u^{p-1}(x)\lim_{R\rightarrow\infty}\int_{B_{R-|x|}^{c}(x)}\frac{dy}{|x-y|^{n+\sigma p}}=0.
\end{align}
Since $\lim\limits_{R\rightarrow\infty}\lim\limits_{i\rightarrow\infty}\Psi_{i}(x,R)$ exists and is finite, it follows from \eqref{MZ008}, \eqref{MZ011}--\eqref{MZ012} and \eqref{MZ026}--\eqref{MZ028} that
\begin{align*}
\lim_{R\rightarrow\infty}\lim_{i\rightarrow\infty}\Psi_{i}(x,R)\leq&c_{n,\sigma p}(1+\varepsilon_{p}^{(1)})\liminf_{R\rightarrow\infty}\liminf_{i\rightarrow\infty}\int_{B^{c}_{R}(0)}\frac{u^{p-1}_{i}(y)}{|x-y|^{n+\sigma p}}dy,\\
\lim_{R\rightarrow\infty}\lim_{i\rightarrow\infty}\Psi_{i}(x,R)\geq&c_{n,\sigma p}(1-\varepsilon_{p}^{(2)})\limsup_{R\rightarrow\infty}\limsup_{i\rightarrow\infty}\int_{B^{c}_{R}(0)}\frac{u^{p-1}_{i}(y)}{|x-y|^{n+\sigma p}}dy,
\end{align*}
where
\begin{align*}
\varepsilon_{p}^{(1)}=&
\begin{cases}
\varepsilon,&\text{if }\,2<p\leq3,\\
\varepsilon(3+\varepsilon),&\text{if }\,p>3\;\text{is an integer},\\
\varepsilon(5+9\varepsilon),&\text{if }\,p>3\;\text{is not an integer},
\end{cases}\\
\varepsilon^{(2)}_{p}=&
\begin{cases}
2\varepsilon,&\text{if }\,2<p\leq3,\\
\varepsilon(4+\varepsilon),&\text{if }\,p>3\;\text{is an integer},\\
3\varepsilon(2+3\varepsilon),&\text{if }\,p>3\;\text{is not an integer}.
\end{cases}
\end{align*}
Due to the fact that $R>>|x|+1$, we have
\begin{align*}
\frac{(R-|x|)|y|}{R}\leq|y-x|\leq\frac{(R+|x|)|y|}{R},\quad\text{for }y\in B_{R}^{c}(0).
\end{align*}
Hence, we deduce
\begin{align*}
\lim_{R\rightarrow\infty}\lim_{i\rightarrow\infty}\Psi_{i}(x,R)\leq&c_{n,\sigma p}(1+\varepsilon_{p}^{(1)})\liminf_{R\rightarrow\infty}\liminf_{i\rightarrow\infty}\int_{B^{c}_{R}(0)}\frac{u^{p-1}_{i}(y)}{|y|^{n+\sigma p}}dy,\\
\lim_{R\rightarrow\infty}\lim_{i\rightarrow\infty}\Psi_{i}(x,R)\geq&c_{n,\sigma p}(1-\varepsilon_{p}^{(2)})\limsup_{R\rightarrow\infty}\limsup_{i\rightarrow\infty}\int_{B^{c}_{R}(0)}\frac{u^{p-1}_{i}(y)}{|y|^{n+\sigma p}}dy,
\end{align*}
By virtue of the arbitrariness of $\varepsilon$ and $\{u_{i}\}$ is nonnegative, we obtain
\begin{align*}
\lim_{R\rightarrow\infty}\lim_{i\rightarrow\infty}\Psi_{i}(x,R)=c_{n,\sigma p}\lim_{R\rightarrow\infty}\lim_{i\rightarrow\infty}\int_{B^{c}_{R}(0)}\frac{u^{p-1}_{i}(y)}{|y|^{n+\sigma p}}dy\geq0.
\end{align*}
This, together with \eqref{MZ003}, yields that Theorem \ref{thm001} holds.

\end{proof}

In order to show that the limit constant $\theta$ captured in Theorem \ref{thm001} may be positive, we consider a sequence of nonnegative functions in the following. Choose a smooth cut-off function $\eta$ satisfying that
\begin{align}\label{eta}
\text{$\eta(t)\equiv0$ in $(-\infty,0]$, $\eta(t)\equiv1$ in $[1,\infty)$, and $0\leq\eta(t)\leq1$ in $[0,1]$}.
\end{align}
Then for any $0<s<t$ and $j\geq1$, define
\begin{align}\label{exa001}
v_{j}(x):=j^{-s}w_{j}(R_{j}^{-1}x),\quad w_{j}(x):=
\begin{cases}
j^{s}+j^{t}\phi(x),\quad\mathrm{in}\;B_{6},\\
(1-\psi(x))(j^{s}+j^{t}),\quad\mathrm{in}\; B_{6}^{c},
\end{cases}
\end{align}
where $\phi(x)=\eta(|x|-3)$, and $\psi(x)=\eta(|x|-6)$, $R_{j}=j^{\frac{(t-s)(p-1)}{\sigma p}}\beta^{\frac{1}{\sigma p}}$ with
\begin{align}\label{MZ031}
\beta:=c_{n,\sigma p}\left(\int_{B_{4}\setminus B_{3}}\frac{\phi^{p-1}(y)}{|y|^{n+\sigma p}}dy+\int_{B_{6}\setminus B_{4}}\frac{dy}{|y|^{n+\sigma p}}+\int_{B_{6}^{c}}\frac{(1-\psi(y))^{p-1}}{|y|^{n+\sigma p}}dy\right).
\end{align}

\begin{example}\label{thm-exm002}
Let $n\geq1$, $p>2$ and $0<\sigma<1$. If condition \eqref{exa001} holds, then we obtain that $v_{j}$ converges to $1$ in $C^{2}_{loc}(\mathbb{R}^{n})$, and
\begin{align*}
\lim_{i\rightarrow\infty}(-\Delta)^{\sigma}_{p}v_{j}(x)=-1.
\end{align*}

\end{example}
\begin{remark}
We here would like to point out that the examples constructed in Example \ref{thm-exm002} and Theorem \ref{thmm003} were first given in \cite{DJXY2021}.

\end{remark}

\begin{proof}
It is easily seen from \eqref{exa001} that $v_{j}\in C_{c}^{\infty}(\mathbb{R}^{n})$, $v_{j}\geq0$ in $\mathbb{R}^{n}$, $v_{j}=1$ in $B_{R_{j}}$, and $\|v_{j}-1\|_{C^{2}_{loc}}\rightarrow0$, as $i\rightarrow\infty$. A direct computation gives that
\begin{align}\label{MZ032}
(-\Delta)^{\sigma}_{p}v_{j}(x)=j^{-s(p-1)}R_{j}^{-\sigma p}(-\Delta)^{\sigma}_{p}w_{j}(R_{j}^{-1}x),\quad\text{for $x\in B_{R_{j}}$}.
\end{align}
For any fixed $x\in\mathbb{R}^{n}$, we have
\begin{align*}
&j^{-t(p-1)}(-\Delta)^{\sigma}_{p}w_{j}(R_{j}^{-1}x)\notag\\
&=-c_{n,\sigma p}\int_{B_{4}\setminus B_{3}}\frac{\phi^{p-1}(y)}{|R_{j}^{-1}x-y|^{n+\sigma p}}dy-c_{n,\sigma p}\int_{B_{6}\setminus B_{4}}\frac{dy}{|R_{j}^{-1}x-y|^{n+\sigma p}}\notag\\
&\quad+\frac{c_{n,\sigma p}}{j^{t-s}}\int_{B_{6}^{c}}\frac{\psi(y)|\psi(y)-1+\psi(y)j^{-(t-s)}|^{p-2}}{|R_{j}^{-1}x-y|^{n+\sigma p}}dy\notag\\
&\quad-c_{n,\sigma p}\int_{B_{6}^{c}}\frac{(1-\psi(y))|\psi(y)-1+\psi(y)j^{-(t-s)}|^{p-2}}{|R_{j}^{-1}x-y|^{n+\sigma p}}dy\notag\\
&\quad\rightarrow-\beta,\quad\text{as $j$ goes to $\infty$,}
\end{align*}
where $\beta$ is defined by \eqref{MZ031}. This, together with \eqref{MZ032}, gives that
\begin{align*}
\lim_{j\rightarrow\infty}(-\Delta)^{\sigma}_{p}v_{j}(x)=\beta^{-1}\lim_{j\rightarrow\infty}j^{-t(p-1)}(-\Delta)^{\sigma}_{p}w_{j}(R_{j}^{-1}x)=-1,\quad\text{in $\mathbb{R}^{n}$}.
\end{align*}
The proof is complete.

\end{proof}

\section{Blow-up analysis for the extended fractional Nirenberg problem}
The extended fractional Nirenberg problem is equivalent to investigating the following equation
\begin{align}\label{ZWM001}
(-\Delta)^{\sigma}_{p}u(x)=K(x)u^{q(p-1)}(x),\quad\text{for $x\in\mathbb{R}^{n}$,}
\end{align}
where $p\geq2$ and $q\in\mathbb{R}$. It has been shown in \cite{DJXY2021} that there arises blow-up phenomena for the linear fractional Laplacian due to the nonzero constant $\theta$ captured in Theorem \ref{thm001}. Specially, for $p=2$, the compactness of solutions to \eqref{ZWM001} will fail in the region where $K$ is negative. In the following, we follow the proof of Theorem 1.3 in \cite{DJXY2021} and extend the result to the nonlinear case of $p>2$. On the other hand, when $K$ is positive, Jin, Li and Xiong \cite{JLX2014,JLX2015,JLX2017} derived a priori estimates for the fractional equation \eqref{ZWM001} with $p=2$.

While these above-mentioned works are related to the fractional Nirenberg problem, there is another direction of research to study the classical elliptic equation $-\Delta u=K(x)u^{p}$. When $n=1,2$ and $1<p<\infty$, or $n\geq3$ and $1<p<\frac{n+2}{n-2}$, $p$ is called a subcritical Sobolev exponent, while it is the critical Sobolev exponent if $n\geq3$ and $p=\frac{n+2}{n-2}$. In particular, the elliptic equation in the case of critical Sobolev exponent corresponds to the Nirenberg problem, which is to seek a new metric conformal to the flat metric on $\mathbb{R}^{n}$ so that its scalar curvature is $K(x)$. Generally, it needs to establish priori estimates of the solutions for the purpose of obtaining the existence of solutions. We refer to \cite{GS1981,GS198102} for the subcritical case. With regard to the critical case, see \cite{CGY1993,L1995,SZ1996} for positive functions $K$ and \cite{CL1997,L1998} for $K$ changing signs, respectively.

\begin{theorem}\label{thmm003}
Assume that $n\geq1$, $p>2$, $0<\sigma<1$, $q\in\mathbb{R}$ and $s>-\frac{\sigma p}{p-1}$. Then there exist two positive constants $c_{0}=c_{0}(n,\sigma,p,q,s)$ and $C_{0}=C_{0}(n,\sigma,p,q,s)$, a sequence of functions $\{K_{j}\}\subset C^{\infty}(\mathbb{R}^{n})$ satisfying
\begin{align*}
-C_{0}\leq K_{j}(x)\leq-c_{0},\;\,c_{0}\leq|\nabla K_{j}(x)|\leq C_{0},\;\text{and}\;|\nabla^{2}K_{j}(x)|\leq C_{0},\quad\text{in $B_{2}$},
\end{align*}
and a sequence of positive functions $\{u_{j}\}\subset C^{\infty}(\mathbb{R}^{n})$ such that
\begin{align*}
(-\Delta)^{\sigma}_{p}u_{j}(x)=K_{j}(x)u_{j}^{q(p-1)}(x),\;\,\text{for } x\in\mathbb{R}^{n},\quad|x|^{s}u_{j}(x)\rightarrow1,\quad\text{as $|x|\rightarrow\infty$},
\end{align*}
and
\begin{align*}
\min\limits_{\overline{B}_{1}}u_{j}\rightarrow\infty,\quad\text{as $j\rightarrow\infty$}.
\end{align*}

\end{theorem}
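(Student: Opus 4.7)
The plan is to follow the strategy of \cite{DJXY2021} for the linear case $p=2$, adapted to the nonlinear operator $(-\Delta)^{\sigma}_{p}$: construct $u_{j}$ explicitly so that its behavior on $\overline{B}_{2}$ and outside encode both the required blow-up and the prescribed curvature, and then define $K_{j}$ directly from the equation as
\begin{align*}
K_{j}(x):=\frac{(-\Delta)^{\sigma}_{p}u_{j}(x)}{u_{j}^{q(p-1)}(x)}.
\end{align*}
The bump that gives rise to the nonzero constant $\theta$ in Theorem \ref{thm001} will be the source of the negative sign of $K_{j}$.

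With the cutoffs $\eta$, $\phi$, and $\psi$ from \eqref{eta}--\eqref{exa001}, I would take
\begin{align*}
u_{j}(x)=A_{j}\bigl(1+B_{j}\phi(R_{j}^{-1}x)\bigr)\bigl(1-\psi(R_{j}^{-1}x)\bigr)+T(x),
\end{align*}
where $T\in C^{\infty}(\mathbb{R}^{n})$ is a fixed nonnegative function with $T\equiv 0$ on $B_{10}$ and $T(x)=|x|^{-s}$ for $|x|$ large, while $A_{j}\to\infty$, $R_{j}\to\infty$, and $B_{j}$ are parameters to be tuned. Provided $R_{j}\geq 10$, on $\overline{B}_{2}$ one has $u_{j}=A_{j}$, so $\min_{\overline{B}_{1}}u_{j}=A_{j}\to\infty$ is immediate. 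The decay condition $|x|^{s}u_{j}(x)\to 1$ follows from the definition of $T$, and $u_{j}\in\mathcal{L}_{\sigma p}(\mathbb{R}^{n})$ because $s>-\sigma p/(p-1)$.

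The quantitative heart of the proof is a careful expansion of $(-\Delta)^{\sigma}_{p}u_{j}(x)$ on $\overline{B}_{2}$. Since $u_{j}(x)=A_{j}$ there, the integrand vanishes on $B_{3R_{j}}\setminus B_{10}$ modulo terms controlled by $T$, while the bump shell $\{4R_{j}\leq|y|\leq 6R_{j}\}$, where $u_{j}(y)-u_{j}(x)\approx A_{j}B_{j}$, provides the leading negative contribution
\begin{align*}
(-\Delta)^{\sigma}_{p}u_{j}(x)\sim-c_{n,\sigma p}(A_{j}B_{j})^{p-1}R_{j}^{-\sigma p}\beta_{0}
\end{align*}
via the scaling-and-limit argument of Example \ref{thm-exm002}, where $\beta_{0}>0$ is the integral constant analogous to \eqref{MZ031}. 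The transition zones and the far tail contribute lower-order terms, bounded by $O(A_{j}^{p-1}R_{j}^{-\sigma p})$ plus an $O(1)$ term obtained from $s>-\sigma p/(p-1)$. Selecting $B_{j}=\beta_{0}^{-1/(p-1)}R_{j}^{\sigma p/(p-1)}A_{j}^{q-1}$ and then $R_{j}$ so large that $B_{j}\gg 1$ (which is possible for every $q\in\mathbb{R}$), one obtains $K_{j}(x)\to-1$ as $j\to\infty$ with the desired order on $\overline{B}_{2}$.

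Two principal obstacles remain. First, the nonlinear integrand forces one to rely on elementwise expansions of the form $\bigl||a|^{p-2}a-|b|^{p-2}b\bigr|\leq C(p)|a-b|(|a-b|^{p-2}+|b|^{p-2})$ used throughout the proof of Theorem \ref{thm001}, both to justify the asymptotics above uniformly in $x\in\overline{B}_{2}$ and, more delicately, to differentiate under the integral twice in order to obtain the $C^{2}$ bound on $K_{j}$. Second, the symmetric construction above is radial and hence yields $\nabla K_{j}(0)=0$, contradicting the requirement $|\nabla K_{j}|\geq c_{0}$. To remedy this I would add an asymmetric perturbation of the form $\delta A_{j}\langle e,x\rangle\rho(x)$ with $\rho\in C_{c}^{\infty}(\mathbb{R}^{n})$, $\rho\equiv 1$ on $\overline{B}_{3}$, and a unit vector $e$, and then verify that this perturbation contributes an order-one term to $\nabla K_{j}$ on $\overline{B}_{2}$ while preserving all previous estimates. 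Finally, the smoothness $K_{j}\in C^{\infty}(\mathbb{R}^{n})$ is inherited from the smoothness of $u_{j}$ and the smooth dependence of the singular integral on $x$.
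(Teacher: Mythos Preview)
Your overall strategy---build $u_{j}$ explicitly and set $K_{j}=(-\Delta)^{\sigma}_{p}u_{j}/u_{j}^{q(p-1)}$---matches the paper. The paper's construction is slightly different in detail: it places the bump at \emph{fixed} radii (so $u_{j}=j+j^{q}\phi(x)$ on $B_{R}$ with $\phi$ supported in $B_{4}\setminus B_{3}$), and the transition to the tail $|x|^{-s}$ happens at the large $j$-dependent radius $R$, not at a fixed radius as in your $T$. This keeps $u_{j}\equiv j$ on $B_{3}$ and makes $K_{j}$ on $B_{2}$ a sum of three integrals over $B_{4}\setminus B_{3}$, $B_{R}\setminus B_{4}$, and $B_{R}^{c}$ that can be estimated directly without the scaling asymptotics you invoke from Example~\ref{thm-exm002}.

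The substantive divergence is in how to obtain the lower bound $|\nabla K_{j}|\geq c_{0}$. You propose to break radial symmetry by adding a linear perturbation $\delta A_{j}\langle e,x\rangle\rho(x)$ to $u_{j}$. This destroys the constancy of $u_{j}$ on $B_{2}$, so the principal-value integral no longer vanishes locally, and you must track how the perturbation propagates through the nonlinear integrand $|u_{j}(x)-u_{j}(y)|^{p-2}(u_{j}(x)-u_{j}(y))$ and then through the quotient defining $K_{j}$; showing this yields an order-one, $j$-uniform contribution to $\nabla K_{j}$ while preserving the sign and $C^{2}$ bounds is not at all automatic. The paper sidesteps this entirely: it keeps $u_{j}$ radial, computes that $\nabla^{2}K_{j}(0)\leq -C_{4}\mathbf{I}_{n}$ (the diagonal entries are explicit integrals, the off-diagonal ones vanish by symmetry), and combines this with the bound $|\nabla^{3}K_{j}|\leq C_{3}$ to conclude that $|\nabla K_{j}|\geq c_{1}$ on a small ball $B_{2\varepsilon_{0}}(4\varepsilon_{0}e_{1})$ away from the origin. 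A final translation-and-dilation $\bar{u}_{j}(x)=\varepsilon_{0}^{s}u_{j}(\varepsilon_{0}(x+4e_{1}))$, $\bar{K}_{j}(x)=\varepsilon_{0}^{\sigma p-s(p-1)(q-1)}K_{j}(\varepsilon_{0}(x+4e_{1}))$ then recenters everything on $B_{2}$. This is considerably cleaner than your perturbation and is the step you are missing.
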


\begin{proof}
Let $\eta$ and $\phi$ be defined in \eqref{eta} and \eqref{exa001}. For $q\in\mathbb{R}$ and $s>-\frac{\sigma p}{p-1}$, let
\begin{align*}
u_{j}(x)=&
\begin{cases}
j+j^{q}\phi(x),&\quad\mathrm{in}\;B_{R},\\
(1-\varphi(x))(j+j^{q})+\varphi(x)|x|^{-s},&\quad\mathrm{in}\;B_{R}^{c},
\end{cases}
\end{align*}
where $\varphi(x)=\eta(|x|-R)$ and $R=R(n,p,q,\sigma,s,j)>9$ is a sufficiently large constant to be determined later. Then $u_{j}\in C^{\infty}(\mathbb{R}^{n})\cap \mathcal{L}_{\sigma p}(\mathbb{R}^{n})$ and $u_{j}>0$ in $\mathbb{R}^{n}$. Denote
\begin{align*}
K_{j}(x):=\frac{(-\Delta)^{\sigma}_{p}u_{j}(x)}{u_{j}^{q(p-1)}(x)},\quad\text{in $\mathbb{R}^{n}$.}
\end{align*}
Then $K_{j}\in C^{\infty}(\mathbb{R}^{n})$. Moreover, $\{K_{j}\}$ satisfies the following properties: there exists four positive constants $C_{i}:=C_{i}(n,\sigma,p)$, $i=1,2,3,4$, such that for every $j\geq1$,
\begin{itemize}
{\it
\item[(\bf{K1})] $-C_{1}\leq K_{j}(x)\leq-C_{2}$, and $\sum\limits^{3}_{i=1}|\nabla^{i}K_{j}(x)|\leq C_{3}$ in $B_{2}$;
\item[(\bf{K2})] $\nabla^{2}K_{j}(0)\leq-C_{4}\mathbf{I}_{n}$, where $\mathbf{I}_{n}$ denotes $n\times n$ identity matrix.}
\end{itemize}
We first prove $(\mathbf{K1})$. Observe that
\begin{align}\label{MZ036}
c_{n,\sigma p}^{-1}K_{j}(x)=&-\int_{B_{4}\setminus B_{3}}\frac{\phi^{p-1}(y)}{|x-y|^{n+\sigma p}}dy-\int_{B_{R}\setminus B_{4}}\frac{dy}{|x-y|^{n+\sigma p}}\notag\\
&+\int_{B_{R}^{c}}\frac{|\mathcal{A}_{\varphi}(y)|^{p-2}\mathcal{A}_{\varphi}(y)}{|x-y|^{n+\sigma p}}dy:=\sum^{3}_{i=1}J_{i},
\end{align}
where $\mathcal{A}_{\varphi}(y):=\varphi(y)-1+j^{1-q}\varphi(y)-j^{-q}|y|^{-s}\varphi(y)$. For simplicity, let
\begin{align*}
\gamma:=&\gamma(n,\sigma,p)=\int_{B_{1}^{c}}\frac{dy}{|y|^{n+\sigma p}}=\frac{|\mathbb{S}^{n-1}|}{\sigma p},\notag\\ \tau:=&\tau(n,\sigma,p,s)=\int_{B_{1}^{c}}\frac{dy}{|y|^{n+\sigma p+s(p-1)}}=\frac{|\mathbb{S}^{n-1}|}{\sigma p+s(p-1)}.
\end{align*}
A straightforward computation yields that
\begin{align*}
0\geq J_{1}\geq-\int_{B_{1}(x)^{c}}\frac{dy}{|x-y|^{n+\sigma p}}=-\gamma,
\end{align*}
and
\begin{align*}
-\gamma\leq J_{2}\leq-\int_{B_{R-2}\setminus B_{6}}\frac{dy}{|y|^{n+\sigma p}}=-(6^{-\sigma p}-(R-2)^{-\sigma p})\gamma.
\end{align*}
For $x\in B_{2}$, $y\in B_{R}^{c}$, we have $|x-y|\geq|y|/2$ in virtue of $R>9$. Then
\begin{align*}
|J_{3}|\leq&2^{(\sigma+1)p+n-2}\int_{B_{R}^{c}}\frac{(1+j^{1-q})^{p-1}+j^{-q(p-1)}|y|^{-s(p-1)}}{|y|^{n+\sigma p}}dy\notag\\
=&2^{(\sigma+1)p+n-2}\gamma(1+j^{1-q})^{p-1}R^{-\sigma p}+2^{(\sigma+1)p+n-2}\tau j^{-q(p-1)}R^{-\sigma p-s(p-1)}.
\end{align*}
For a sufficiently large $R>9$, we have
\begin{align*}
(R-2)^{-\sigma p}\gamma+2^{(\sigma+1)p+n-2}R^{-\sigma p}\left(\gamma(1+j^{1-q})^{p-1}+\tau j^{-q(p-1)}R^{-s(p-1)}\right)\leq\frac{\gamma6^{-\sigma p}}{2},
\end{align*}
which implies that
\begin{align*}
-3c_{n,\sigma p}\gamma\leq K_{j}(x)\leq-\frac{c_{n,\sigma p}\gamma6^{-\sigma p}}{2},\quad\forall\;x\in B_{2},\;j\geq1.
\end{align*}
Furthermore, after differentiating \eqref{MZ036}, it follows from a similar calculation that
\begin{align*}
\sum^{3}_{i=1}|\nabla^{i}K_{j}(x)|\leq C(n,\sigma,p),\quad\text{for $x\in B_{2},\;j\geq1$.}
\end{align*}

We proceed to verify property $(\bf{K2})$. A simple calculation shows that for $y\in B_{3}^{c}$,
\begin{align}\label{MZ039}
\partial_{x_{k}x_{l}}^{2}\left(\frac{1}{|x-y|^{n+\sigma p}}\right)(0)=\frac{(n+\sigma p)[(n+\sigma p+2)y_{k}y_{l}-\delta_{kl}|y|^{2}]}{|y|^{n+\sigma p+4}}.
\end{align}
Since the integral domain is symmetric, then we see from \eqref{MZ036}--\eqref{MZ039} that
\begin{align*}
\partial^{2}_{x_{k}x_{l}}K_{j}(0)=0,\quad\text{for $k\neq l$.}
\end{align*}
If $k=l$, it follows from the radial symmetry of $\phi$ and $\varphi$ that
\begin{align*}
&[(n+\sigma p)c_{n,\sigma p}]^{-1}\partial^{2}_{x_{k}x_{k}}K_{j}(0)\notag\\
&=-\frac{\sigma p+2}{n}\left(\int_{B_{4}\setminus B_{3}}\frac{\phi^{p-1}(y)}{|y|^{n+\sigma p+2}}dy+\int_{B_{R}^{c}}\frac{|\mathcal{A}_{\varphi}(y)|^{p-2}(j^{1-q}\varphi(y)-\mathcal{A}_{\varphi}(y))}{|y|^{n+\sigma p+2}}dy\right)\notag\\
&\quad-\frac{\sigma p+2}{n}\left(\int_{B_{R}\setminus B_{4}}\frac{dy}{|y|^{n+\sigma p+2}}-j^{1-q}\int_{B_{R}^{c}}\frac{|\mathcal{A}_{\varphi}(y)|^{p-2}\varphi(y)}{|y|^{n+\sigma p+2}}dy\right)\notag\\
&\leq-|B_{1}|\left(4^{-(\sigma p+2)}-R^{-(\sigma p+2)}-3j^{1-q}R^{-(\sigma p+2)}\right)\notag\\
&\leq-|B_{1}|4^{-(\sigma p+3)},\quad\text{for a sufficiently large $R>9$,}
\end{align*}
where we used the fact that $|\mathcal{A}_{\varphi}(y)|^{p-2}\varphi(y)\leq3$ in $B_{R}^{c}$. That is, property $(\bf{K2})$ holds.

From the radial symmetry of $u_{j}$ with respect to the origin, we know that $K_{j}$ is also radially symmetric. Then we have
\begin{align*}
\nabla K_{j}(0)=0,
\end{align*}
which, together with $(\bf{K1})$--$(\bf{K2})$, leads to that for $j\geq1$,
\begin{align}\label{MZ050}
|\nabla K_{j}(x)|\geq c_{1},\quad\text{in $B_{2\varepsilon_{0}}(4\varepsilon_{0}e_{1})$,}
\end{align}
where $e_{1}=(1,0,...,0)\in\mathbb{R}^{n}$, $\varepsilon_{0}:=\varepsilon_{0}(n,p,\sigma)\in(0,1/4)$ is a small constant and $c_{1}:=c_{1}(n,p,\sigma)$ is a positive constant.

Define
\begin{align*}
\bar{u}_{j}(x):=\varepsilon_{0}^{s}u_{j}(\varepsilon_{0}(x+4e_{1})),\quad\mathrm{and}\;\,\bar{K}_{j}(x):=\varepsilon_{0}^{\sigma p-s(p-1)(q-1)}K_{j}(\varepsilon_{0}(x+4e_{1})).
\end{align*}
Therefore,
\begin{align*}
(-\Delta)^{\sigma}_{p}\bar{u}_{j}=\bar{K}_{j}(x)\bar{u}_{j}^{q(p-1)},\quad\text{for $x\in\mathbb{R}^{n}$}.
\end{align*}
Then combining $(\bf{K1})$ and \eqref{MZ050}, we obtain
\begin{align*}
-\bar{C}\leq \bar{K}_{j}(x)\leq-\bar{c},\;\,\bar{c}\leq|\nabla \bar{K}_{j}(x)|\leq \bar{C},\;\text{and}\;|\nabla^{2}\bar{K}_{j}(x)|\leq \bar{C},\quad\text{in $B_{2}$},
\end{align*}
where $\bar{c}=\bar{c}(n,\sigma,p,q,s)$ and $\bar{C}=\bar{C}(n,\sigma,p,q,s)$. Moreover, recalling the definition of $u_{j}$, we have
\begin{align*}
\lim_{|x|\rightarrow\infty}|x|^{s}\bar{u}_{j}=1,\quad\text{and}\;\,\min_{\overline{B}_{1}}\bar{u}_{j}=\varepsilon_{0}^{s}j\rightarrow\infty,\quad\text{as $j\rightarrow\infty$.}
\end{align*}
The proof is finished.

\end{proof}

\noindent{\bf{\large Acknowledgements.}}
The author would like to thank Prof. C.X. Miao for his constant encouragement and useful discussions. The author was partially supported
by CPSF (2021M700358).

\bibliographystyle{plain}

\def\cprime{$'$}

\end{document}